\documentclass[11pt, a4paper]{amsart}
\usepackage{amssymb, array, amsmath, amscd, pdfpages, enumerate, amsthm, fixltx2e, setspace, hyperref, bbm}

\DeclareMathOperator*{\Span}{span}

\newcommand{\RR}{\mathbb{R}}
\newcommand{\CC}{\mathbb{C}}

\newcommand{\FF}{\mathbb{F}}

\newtheorem{thm}{Theorem}[section]
\newtheorem{prp}[thm]{Proposition}

\newtheorem{cor}[thm]{Corollary}
\theoremstyle{definition}
\newtheorem{rem}[thm]{Remark}
\newtheorem{ex}[thm]{Example}

\numberwithin{equation}{section}

\begin{document}

\title[Orderings in the method of alternating projections]{Non-optimality of the Greedy Algorithm for subspace orderings in the method of alternating projections}

\author{O. Darwin, A. Jha, S. Roy, D. Seifert, R. Steele, L. Stigant}
\address{Mathematical Institute\\
University of Oxford\\
Andrew Wiles Building\\
Radcliffe Observatory Quarter\\
Woodstock Road\\
Oxford OX2 6GG}
\email{oscar.darwin@magd.ox.ac.uk}
\email{aashraya.jha@univ.ox.ac.uk}
\email{souktik.roy@sjc.ox.ac.uk}
\email{david.seifert@sjc.ox.ac.uk}
\email{rhys.steele@sjc.ox.ac.uk}
\email{liam.stigant@jesus.ox.ac.uk}

\begin{abstract}
The method of alternating projections involves projecting an element of a Hilbert space cyclically onto a collection of closed subspaces. It is known that the resulting sequence always converges in norm and that one can obtain estimates for the rate of convergence in terms of quantities  describing the geometric relationship between the subspaces in question, namely their pairwise Friedrichs numbers. We consider the question of how best to order a given collection of subspaces so as to obtain the best estimate on the rate of convergence. We prove, by relating the ordering problem to a variant of the famous Travelling Salesman Problem, that correctness of a natural form of the Greedy Algorithm would imply that $\mathrm{P}=\mathrm{NP}$, before presenting a simple example which shows that, contrary to a claim made in the influential paper \cite{KaWe88}, the result of the Greedy Algorithm is not in general optimal. We go on to  establish sharp estimates on the degree to which the result of the Greedy Algorithm can differ from the optimal result. Underlying all of these results is a construction which shows that for any matrix whose entries satisfy certain natural assumptions it is possible to construct a Hilbert space and a collection of closed subspaces such that the pairwise Friedrichs numbers between the subspaces are given precisely by the entries of that matrix.

\end{abstract}

\subjclass[2010]{47J25, 65F10 (68Q25)}
\keywords{Method of alternating projections, orderings, subspaces, rate of convergence, travelling salesman problem, complexity.}

\maketitle

\section{Introduction}\label{sec:intro} 

Let $X$ be a real or complex Hilbert space, $N\ge2$ an integer, and suppose that $M_1,\dotsc,M_N$ are closed subspaces of $X$. Furthermore let $P_k$ denote the orthogonal projection onto $M_k$, $1\le k\le N$, and let $P_M$ denote the orthogonal projection onto the intersection  $M=M_1\cap\dotsc\cap M_N$. If we let $T=P_N\cdots P_1$ then it follows from a classical theorem due to Halperin \cite{Hal62} that 
\begin{equation}\label{eq:conv}
\|T^nx-P_Mx\|\to0,\quad n\to\infty,
\end{equation}
for all $x\in X$. It follows easily that, for any $x\in X$, the sequence in $X$ obtained by starting at $X$ and then projecting cyclically onto the $N$ subspaces $M_1,\dotsc,M_N$ must converge to the point $P_Mx$, which is the point in $M$ closest to the starting vector $x$. This procedure is known as the \emph{method of alternating projections} and has many applications, for instance to the iterative solution of large linear systems but also in the theory of partial differential equations and in image restoration; see \cite{De92} for a survey. 

In view of these applications it is important to understand the \emph{rate} at which the convergence in \eqref{eq:conv} takes place; see for instance \cite{BGM11, BaSe16, DeHu10, DeHu15} for in-depth investigations.  Recall that the \emph{Friedrichs number} $c(L_1,L_2)$ between the two subspaces $L_1, L_2$ of $X$ is defined as
$$c(L_1, L_2)=\sup\big\{|(x_1,x_2)|:x_k\in L_k\cap L^\perp\mbox{ and }\|x_k\|\le1\mbox{ for }k=1,2\big\},$$
where $L=L_1\cap L_2$. The Friedrichs number lies in the interval $[0,1]$ and may be thought of as the cosine of the `angle' between the subspaces $L_1$ and $L_2$. It is shown in \cite[Theorem~2]{KaWe88} that for $N=2$ in the method of alternating projections we have   
\begin{equation}\label{eq:KW}
\|T^n-P_M\|=c(M_1,M_2)^{2n-1},\quad n\ge1.
\end{equation}
 When $N\ge3$ no sharp upper bound of this form is known, but it is shown in \cite[Corollary~2.10]{DeHu97} that
\begin{equation}\label{eq:prod}
\|T^n-P_M\|\le c(M_N,M_{N-1})^n\cdots c(M_2,M_1)^n c(M_1,M_N)^{n-1},\quad n\ge1,
\end{equation} 
provided the subspaces are \emph{pairwise quasi-disjoint} in the sense that $M_k\cap M_\ell\cap M^\perp=\{0\}$ for $1\le k,\ell\le N$ with $k\ne\ell$. Moreover, the assumption on the subspaces cannot be omitted. The same bound was obtained earlier in \cite{KaWe88} in the special case where the subspaces $M_1\cap M^\perp,\dots,M_N\cap M^\perp$ are \emph{independent}, which is to say that if vectors  $x_k\in M_k\cap M^\perp$, $1\le k\le N$, satisfy $x_1+\dots +x_N=0$ then $x_1=\dotsc= x_N=0$. 

Examples in \cite[Section~3]{DeHu97} show both that the bound in \eqref{eq:bound} fails to be sharp in some special cases, thus disproving a conjecture made in \cite{KaWe88}, and more generally that it is not possible for $N\ge3$ to obtain a sharp upper bound for $\|T^n-P_M\|$, $n\ge1$, which depends only on the pairwise Friedrichs numbers between the subspaces $M_1,\dots,M_N$. Nevertheless, the estimate in \eqref{eq:prod} recovers the sharp bound in \eqref{eq:KW} when $N=2$ and  holds with equality in a number of other cases, for instance if all of the spaces $M_1,\dotsc,M_N$ are one-dimensional.  We also see from \eqref{eq:prod} that if the Friedrichs number between a pair of consecutive subspaces is zero then we have convergence in the method of alternating projections after at most two steps. Since our interest here is primarily in the asymptotic rate of convergence as $n\to\infty$, there is no significant loss of generality in assuming that  $c(M_k,M_\ell)>0$ for  $1\le k,\ell\le N$ with $k\ne\ell$. In this case \eqref{eq:prod} may be recast as  
\begin{equation}\label{eq:bound}
\|T^n-P_M\|\le Cr^n,\quad n\ge1,
\end{equation}
where $C=c(M_1,M_N)^{-1}$ and $r=\prod_{k=1}^Nc(M_k,M_{k+1})$, indices henceforth being considered modulo $N$. Since the asymptotic rate of convergence is determined by the value of $r\in(0,1]$, it is natural to seek the reordering of the subspaces $M_1,\dots,M_N$ which leads to the smallest possible value of $r$. More formally, given $N\ge2$ we let $S_N$ denote the symmetric group on $N$ letters and for each $\sigma\in S_N$ we let 
$r_\sigma=\prod_{k=1}^Nc(M_{\sigma(k)},M_{\sigma(k+1)}),$
so that for the reordered product $T_\sigma=P_{\sigma(N)}\cdots P_{\sigma(1)}$ we obtain
$$\|T^n_\sigma-P_M\|\le C_\sigma r_\sigma^n,\quad n\ge1,$$
where $C_\sigma=c(M_{\sigma(1)},M_{\sigma(N)})^{-1}$. The objective therefore is to find a permutation $\sigma\in S_N$ such that $r_\sigma=r_*$, where $r_*=\min\{r_\sigma:\sigma\in S_N\}$, and to find such a permutation a version of the following `greedy' algorithm was proposed in \cite[Section~9]{KaWe88}.

\begin{quote}\textsc{Greedy Algorithm:} 
Given $N\ge2$ independent closed subspaces $M_1,\dots,M_N$ of a Hilbert space $X$ whose mutual Friedrichs numbers are known we obtain permutations $\sigma_k\in S_N$, $1\le k\le N$, as follows. Let $\sigma_k(1)=k$ and for $j=2,\dots,N$ consider as possible values for $\sigma_k(j)$ any previously unused index $\ell$ which minimises $c(M_{\sigma_k(j-1)},M_\ell)$.  If at any stage there is more than one choice of such an index $\ell$ then proceed by considering all possible choices of this index and take $\sigma_k$ to be that permutation which among those leading to the least value of $r_{\sigma_k}$ comes first in the lexicographical ordering. Return the permutation $\sigma_G=\sigma_\ell$ where $\ell\in\{1,\dots,N\}$ is the smallest index such that $r_{\sigma_\ell}=\min\{r_{\sigma_k}:1\le k\le N\}$.
\end{quote}

If we let $r_G=r_{\sigma_G}$, $N\ge2$, then the Greedy Algorithm is correct if and only if $r_G=r_*$ for all constellations of subspaces.  By definition of $r_*$ it is clear that $r_*\le r_G$, $N\ge2$. In Section~\ref{sec:greed} we show that if the Greedy Algorithm were correct  then it would follow that $\mathrm{P}=\mathrm{NP}$. We then exhibit a simple example with $N=4$ in which $r_*<r_G$. Both  results are obtained as a consequence of a construction, presented in Section~\ref{sec:Fried}, which shows that any suitable collection of numbers in $[0,1]$  arises as the set of pairwise Friedrichs numbers between subspaces of some Hilbert space. This result is of independent interest and in particular implies that the problem of finding an optimal ordering is at least as hard as solving a multiplicative form of the Travelling Salesman Problem (TSP). In Section~\ref{sec:est} we  give sharp estimates for the maximal discrepancies between $r_*$ and $r_G$. In particular, we show that generically $r_G<\smash{r_*^{1/2}}$, and that the estimate is optimal in the sense that for every $\varepsilon\in(0,1)$ there exists some $N\ge2$ and a suitable collection of $N$ subspaces  of some Hilbert space such that $ r_G>(1-\varepsilon)\smash{r_*^{1/2}}.$ The last step once again requires the construction from Section~\ref{sec:Fried}.

\section{Friedrichs matrices}\label{sec:Fried}

Given $N\ge2$ closed subspaces $M_1,\dots,M_N$ of a  Hilbert space, we may consider the $N\times N$-matrix $(c(M_k,M_\ell))_{1\le k,\ell\le N}$ whose entries are 
the pairwise Friedrichs numbers between the various subspaces. We call the matrix arising in this way the \emph{Friedrichs matrix} corresponding to the collection of subspaces. It is clear that any Friedrichs matrix must be symmetric, have zeros along its main diagonal and elsewhere must have entries lying in the interval $[0,1]$. Is every square matrix which has these three properties a Friedrichs matrix for some collection of closed subspaces? The following result answers this question in the affirmative. Here and in what follows we use the same notation as in Section~\ref{sec:intro}.

\begin{thm}\label{thm:Fried}
Let $\FF\in\{\RR,\CC\}$ and $N\ge2$, and suppose that $C$ is an $N\times N$-matrix which is symmetric, has zeros along its main diagonal and elsewhere has entries lying in the interval $[0,1]$. Then there exists a  Hilbert space $X$ over the field $\FF$ and closed subspaces $M_1,\dots,M_N$ of $X$ such that $C$ is the corresponding  Friedrichs matrix. Furthermore, the subspaces can be constructed in such a way that $M_k\cap M_\ell=\{0\}$ for $1\le k,\ell\le N$ with $k\ne\ell$ and, if $N\ge3$, $P_kP_\ell P_m=0$ for $1\le k,\ell,m\le N$ mutually distinct.
\end{thm}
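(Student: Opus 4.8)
The plan is to realise each prescribed off-diagonal entry $C_{k\ell}$ by a self-contained two-subspace ``gadget'' and then to glue the gadgets together orthogonally, so that the pairwise interactions decouple completely. Concretely, for each unordered pair $\{k,\ell\}$ I would construct a Hilbert space $H_{\{k,\ell\}}$ over $\FF$ together with two closed subspaces $A^k_{\{k,\ell\}},A^\ell_{\{k,\ell\}}\subseteq H_{\{k,\ell\}}$ whose intersection is trivial and whose mutual Friedrichs number equals $C_{k\ell}$, and then set $X=\bigoplus_{\{k,\ell\}}H_{\{k,\ell\}}$ (orthogonal direct sum over all pairs) and $M_k=\bigoplus_{\ell\ne k}A^k_{\{k,\ell\}}$. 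Since the matrix $C$ is symmetric, one gadget per unordered pair suffices.

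For the gadget itself, if $C_{k\ell}<1$ I would take $H_{\{k,\ell\}}=\FF^2$ with $A^k$ the span of $(1,0)$ and $A^\ell$ the span of $(C_{k\ell},\sqrt{1-C_{k\ell}^2})$; these are distinct lines, so their intersection is trivial, and a direct computation gives Friedrichs number $C_{k\ell}$ with the supremum attained. The delicate case, and the main obstacle, is $C_{k\ell}=1$: here the requirement that the Friedrichs number equal $1$ while the intersection remains trivial forces an infinite-dimensional gadget in which the defining supremum is approached but never attained. I would handle this by taking $H_{\{k,\ell\}}=\ell^2(\NN;\FF)\oplus\ell^2(\NN;\FF)$, letting $A^k=\ell^2(\NN;\FF)\oplus\{0\}$ and letting $A^\ell$ be the graph of the bounded injective diagonal operator $D=\mathrm{diag}(1/n)_{n\in\NN}$. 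Then $A^\ell$ is closed and $A^k\cap A^\ell=\{0\}$ by injectivity of $D$, while the upper bound $1$ is automatic and the ratio $|(x,y)|/(\|(x,0)\|\,\|(y,Dy)\|)$ can be pushed arbitrarily close to $1$ by concentrating $y$ on high indices, so the Friedrichs number is exactly $1$.

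It then remains to verify the three global claims from the decoupled structure, and each reduces to a single gadget. Because $M_j$ is an orthogonal direct sum of pieces living only in the gadgets that contain $j$, its projection $P_j$ acts gadget-by-gadget and annihilates every gadget not containing $j$. The identity $M_k\cap M_\ell=\{0\}$ follows because the only gadget common to $M_k$ and $M_\ell$ is $H_{\{k,\ell\}}$: comparing components forces any common vector to lie in $A^k_{\{k,\ell\}}\cap A^\ell_{\{k,\ell\}}=\{0\}$ and to vanish in all other gadgets. For the Friedrichs number, triviality of the intersection means $c(M_k,M_\ell)=\sup\{|(x,y)|:x\in M_k,\,y\in M_\ell,\,\|x\|,\|y\|\le 1\}$, and since for such $x,y$ only the $H_{\{k,\ell\}}$-components contribute to $(x,y)$, the per-gadget bound yields $c(M_k,M_\ell)\le C_{k\ell}$, while test vectors supported in that one gadget give the matching lower bound.

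Finally, for distinct $k,\ell,m$ with $N\ge3$ I would simply trace supports: $P_m$ lands in gadgets containing $m$, so $P_\ell P_m$ is supported in the single gadget $H_{\{\ell,m\}}$, which does not contain $k$; since $P_k$ kills that gadget, $P_kP_\ell P_m=0$. The only genuinely nonroutine ingredient is the $C_{k\ell}=1$ gadget, and once that is in place the gluing and the three verifications are bookkeeping about orthogonal components.
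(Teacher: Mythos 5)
Your proof is correct and essentially the same as the paper's: the paper's construction for entries $c_{k,\ell}<1$ is precisely your two-dimensional gadget per unordered pair (the gadget for $\{k,\ell\}$ living in $\Span\{e_{k,\ell},e_{\ell,k}\}\subseteq\FF^{N(N-1)}$), glued orthogonally exactly as you describe, and for entries equal to $1$ the paper likewise attaches orthogonal $\ell^2$-gadgets consisting of two closed subspaces $U,V$ with $U+V$ not closed. The only difference is cosmetic: where you exhibit an explicit such gadget (the graph of the diagonal operator with entries $1/n$) and verify directly that its Friedrichs number is $1$, the paper takes abstract $U,V$ and cites \cite[Theorem~9.35]{De01} for the same conclusion.
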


\begin{proof}
Let $C=(c_{k,\ell})$ and suppose first that $0\le c_{k,\ell}<1$ for $1\le k,\ell\le N$. Let $\{e_{k,\ell}:1\le k,\ell\le N,k\ne\ell\}$ be an orthonormal basis for the space $X=\FF^{N(N-1)}$ endowed with the Euclidean norm, and set
$$x_{k,\ell}=\begin{cases}
e_{k,\ell}, & 1\le k<\ell\le N,\\
c_{\ell,k}e_{\ell,k}+(1-c_{\ell,k}^2)^{1/2}e_{k,\ell}, & 1\le\ell< k\le N. 
\end{cases}$$
For $1\le k\le N$ let $B_k=\{x_{k,\ell}:1\le\ell\le N, \,\ell\ne k\}$, noting that these sets are orthonormal,  and consider the closed subspaces of $X$ given by $M_k=\Span B_k$. By our assumption that the entries of $C$ be strictly smaller than 1 we see that $M_k\cap M_\ell=\{0\}$ for $1\le k,\ell\le N$ with $k\ne\ell$, and in particular $M=\{0\}$. Furthermore, for $1\le k,k',\ell,\ell'\le N$ with $k\ne\ell$ and $k'\ne\ell'$ we have
$$(x_{k,\ell},x_{k'\ell'})=\begin{cases}
1,& k=k',\, \ell=\ell',\\
c_{k,\ell}, & k'=\ell, \,\ell'=k,\\
0, & \text{otherwise},
\end{cases}$$
from which it follows that $c(M_k,M_\ell)=c_{k,\ell}$ for $1\le k,\ell\le N$ with $k\ne\ell$ and, if $N\ge3$, that $P_kP_\ell P_m=0$ for $1\le k,\ell,m\le N$ mutually distinct. 

Now consider the general case where $0\le c_{k,\ell}\le1$ for $1\le k,\ell\le N$ and consider the matrix $B=(b_{k,\ell})$ with entries
$$b_{k,\ell}=\begin{cases}
c_{k,\ell}, & c_{k,\ell}<1,\\
0, & c_{k,\ell}=1,
\end{cases}$$
for $1\le k,\ell\le N$. By the first part we may find closed subspaces $L_1,\dotsc,L_N$ of $\FF^{N(N-1)}$ whose Friedrichs matrix is $B$. Let  $X=\FF^{N(N-1)}\oplus Y$, where $Y=\bigoplus_{1\le \ell<m\le N} \ell^2$, and endow $X$ with its natural Hilbert space norm. Moreover, let $U,V$ be two closed subspaces of $\ell^2$ such that $U+V$ is not closed. For $1\le k,\ell,m\le N$ with $\ell< m$  define the subspaces $Y^{\ell,m}_k$ of $\ell^2$ by
$$Y^{\ell,m}_k=\begin{cases}
U, & c_{\ell,m}=1 \text{ and } k=\ell,\\
V, & c_{\ell,m}=1 \text{ and } k=m,\\
\{0\}, & \text{otherwise},
\end{cases}$$
and for $1\le k\le N$  define the closed subspace $M_k$ of $X$ by  $M_k=L_k\oplus Y_k$, where $Y_k= \bigoplus_{1\le\ell<m\le N}Y^{\ell,m}_k$. If $1\le k<\ell\le N$ are such that $c_{k,\ell}<1$, then for $1\le m<n\le N$ we have either $Y_k^{m,n}=\{0\}$ or $Y_\ell^{m,n}=\{0\}$ and therefore $c(M_k,M_\ell)=c(L_k,L_\ell)=b_{k,\ell}=c_{k,\ell}$. Suppose that $1\le k<\ell\le N$ and  that $c_{k,\ell}=1$. Then for $1\le m<n\le N$ we see that $Y^{m,n}_k+Y_\ell^{m,n}=U+V$ if and only if $k=m$ and $\ell=n$, and that otherwise $Y^{m,n}_k+Y_\ell^{m,n}$ equals either $U,V$ or $\{0\}$. It follows that $Y_k+Y_\ell$ is not closed, and hence $M_k+ M_\ell$ is not closed. By \cite[Theorem~9.35]{De01} this implies that $c(M_k,M_\ell)=1=c_{k,\ell}$, and hence we have the required subspaces. Moreover, it is clear from the construction that $M_k\cap M_\ell=\{0\}$ for $1\le k,\ell\le N$ with $k\ne\ell$ and, if $N\ge3$, that $P_kP_\ell P_m=0$ for $1\le k,\ell,m\le N$ mutually distinct.
\end{proof}

\begin{rem}
Note that the result in particular provides a new proof of the fact that in general the optimal value of $r$ in \eqref{eq:bound} cannot be expressed as a function of pairwise Friedrichs numbers between the subspaces $M_1,\dots,M_N$ when $N\ge3$, as was first observed in a particular case in \cite[Example~3.7]{DeHu97}. Indeed, for any collection of closed subspaces $M_1',\dots,M_N'$, $N\ge3$, of some Hilbert space such that in the method of alternating projections we do not have  convergence in one step, by Theorem~\ref{thm:Fried} we may find an alternative collection of closed subspaces $M_1,\dots,M_N$ of some Hilbert space with the same  pairwise Friedrichs numbers but for which $T=P_M=0$.
\end{rem}

\section{Incorrectness of the Greedy Algorithm}\label{sec:greed}

In this section we turn to the Greedy Algorithm presented in Section~\ref{sec:intro}, and in particular we ask whether the algorithm is \emph{correct} in the sense that the ordering it produces leads to the optimal value of $r\in[0,1]$ in \eqref{eq:bound}. We first consider the connection between our problem of finding an optimal ordering and the classical TSP, and we show in Corollary~\ref{cor:TSP} below that correctness of the Greedy Algorithm for a sufficiently large class of cases would imply that $\mathrm{P}=\mathrm{NP}$. We then exhibit a simple example in which the Greedy Algorithm gives a suboptimal ordering.

Recall that in the graph-theoretical formulation of the TSP we are given, for some $N\ge2$,  a complete graph $K_N$ with vertices $V_N=\{1,2,\dotsc,N\}$ and  a \emph{weight function} 
$$w\colon \big\{(k,\ell)\in V_N^2:k\ne\ell\big\}\to\RR$$
 such that  $w(k,\ell)=w(\ell,k)$ for $1\le k,\ell\le N$ with $k\ne\ell$, and the objective is to find a permutation $\sigma^*\in S_N$ such that $\Sigma_{\sigma^*}=\min\{\Sigma_\sigma:\sigma\in S_N\}$, where for a permutation $\sigma\in S_N$ we let 
$$\Sigma_\sigma=\sum_{k=1}^Nw\big(\sigma(k),\sigma(k+1)\big)$$
with indices, as usual,  considered modulo $N$. We will be interested primarily in the multiplicative form of the TSP, denoted by MTSP, in which the objective is to minimise not the additive cost but instead to find $\sigma^*\in S_N$ such that $\Pi_{\sigma^*}=\min\{\Pi_\sigma:\sigma\in S_N\}$, where for a permutation $\sigma\in S_N$ we let
$$\Pi_\sigma=\prod_{k=1}^Nw\big(\sigma(k),\sigma(k+1)\big).$$
It is clear that TSP and MTSP have the same solution, and indeed one may pass from one form of the problem to the other simply by replacing the weight function by its logarithm or its exponential, as appropriate. Furthermore, the solution of TSP is unaffected by shifting the values of the weight function by a constant amount, which implies in particular that there is no loss of generality in considering the MTSP only for weight functions taking values in the range $[0,1]$.

It is well known that the TSP, and hence also MTSP, is NP-complete. This means that it lies in the complexity class NP and is NP-hard, which is to say that any other problem in NP can be transformed into an instance of the TSP in polynomial time. Furthermore, by considering the corresponding decision problems it can be seen that TSP and hence MTSP remain NP-complete if the weight function is assumed to take distinct values on distinct pairs. Our first result is an application of Theorem~\ref{thm:Fried} showing that the subspace ordering problem is NP-hard.

\begin{prp}\label{thm:TSP}
The problem of finding an optimal ordering for    collections of independent closed subspaces with pairwise distinct Friedrichs numbers is NP-hard.
\end{prp}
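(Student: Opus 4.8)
The plan is to exhibit a polynomial-time reduction from MTSP with pairwise distinct weights, which was noted above to remain NP-complete, to the problem of finding an optimal subspace ordering; the bridge between the two is Theorem~\ref{thm:Fried}. Starting from an instance of MTSP on $K_N$ with a weight function $w$ taking pairwise distinct values, I would first normalise the weights so that they lie strictly inside $(0,1)$. Since passing to logarithms turns MTSP into the additive TSP, and since the optimal tour for TSP is unaffected by an increasing affine change $u\mapsto au+b$ with $a>0$ of the edge weights $u=\log w$ (every tour involves exactly $N$ edges, so all tour sums are scaled by $a$ and shifted by $Nb$), I can compose such a map with the exponential to replace $w$ by a weight function $\tilde w$ whose values are pairwise distinct, lie in $(0,1)$, and determine exactly the same optimal permutations. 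All of this is elementary and costs only polynomial time.

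Next I would feed the symmetric matrix $C=(c_{k,\ell})$ with $c_{k,\ell}=\tilde w(k,\ell)$ for $k\ne\ell$ and zero diagonal into the first part of Theorem~\ref{thm:Fried}, whose hypotheses are met because every off-diagonal entry lies in $(0,1)$. This yields explicit closed subspaces $M_1,\dots,M_N$ of $\FF^{N(N-1)}$ whose Friedrichs matrix is precisely $C$ and for which $M=\{0\}$. For any $\sigma\in S_N$ we then have
$$r_\sigma=\prod_{k=1}^Nc(M_{\sigma(k)},M_{\sigma(k+1)})=\prod_{k=1}^N\tilde w\big(\sigma(k),\sigma(k+1)\big)=\Pi_\sigma,$$
so that the permutations minimising $r_\sigma$ are exactly the solutions of the given MTSP instance. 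Hence any procedure solving the subspace ordering problem also solves MTSP, and the reduction will be complete once I check that the constructed subspaces meet the requirements of the statement and that the construction is genuinely polynomial in the input size.

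The Friedrichs numbers are pairwise distinct by construction, and the construction is explicit and of dimension $N(N-1)$, so it can be written down in polynomial time; the one point that genuinely needs an argument is independence. Since $M=\{0\}$, the independence condition reduces to the assertion that the sum $M_1+\dots+M_N$ is direct, equivalently that the union of the orthonormal bases $B_1,\dots,B_N$ is linearly independent. Here I would exploit the block structure of the construction in Theorem~\ref{thm:Fried}: for each unordered pair $\{a,b\}$ with $a<b$ only the two vectors $x_{a,b}=e_{a,b}$ and $x_{b,a}=c_{a,b}e_{a,b}+(1-c_{a,b}^2)^{1/2}e_{b,a}$ involve the coordinates $e_{a,b},e_{b,a}$, and these two are linearly independent precisely because $c_{a,b}<1$. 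As distinct pairs occupy disjoint coordinate blocks, the whole family $\{x_{k,\ell}\}$ is linearly independent, which is exactly the independence of the $M_k$. This is the only structural step, and I expect it to be the main obstacle; everything else is routine book-keeping, the real content of the proposition being the observation that Theorem~\ref{thm:Fried} lets one realise an arbitrary MTSP cost matrix as a Friedrichs matrix.
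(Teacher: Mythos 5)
Your proposal is correct and follows essentially the same route as the paper: both reduce TSP/MTSP with pairwise distinct weights to the subspace ordering problem by realising the cost matrix as a Friedrichs matrix via Theorem~\ref{thm:Fried}, under which $r_\sigma=\Pi_\sigma$ for every $\sigma\in S_N$. Your two refinements --- normalising the weights into the open interval $(0,1)$ so that only the finite-dimensional first part of the construction in Theorem~\ref{thm:Fried} is needed, and the explicit block-structure check that the resulting subspaces are independent (a property the paper's proof invokes but which the statement of Theorem~\ref{thm:Fried} does not itself assert) --- are both valid and, if anything, make the polynomial-time claim and the independence hypothesis more transparent than in the paper's own argument.
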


\begin{proof}
It suffices to show that every instance of TSP with distinct costs can be transformed in polynomial time into a subspace ordering problem with pairwise distinct Friedrichs numbers. However, this follows straightforwardly from Theorem~\ref{thm:Fried}. Indeed, given a TSP problem on $N\ge2$ vertices we may transform it to an instance of MTSP with weight function taking values in the range $[0,1]$ in $O(N^2)$ steps. Let $C=(c_{k,\ell})_{1\le k,\ell\le N}$ be the symmetric matrix with zeros along its main diagonal  and entries $c_{k,\ell}=w(k,\ell)$ for $1\le k,\ell\le N$ with $k\ne\ell$. By Theorem~\ref{thm:Fried} there exists a Hilbert space $X$ and independent closed subspaces $M_1,\dots,M_N$ of $X$ such that $C$ is the associated Friedrichs matrix. Moreover, it is clear from the proof of Theorem~\ref{thm:Fried}  that it is possible to obtain these subspaces in polynomial time. If we find a permutation $\sigma^*\in S_N$ such that $r_{\sigma^*}=r_*$, then since $r_\sigma=\Pi_\sigma$ for all $\sigma\in S_N$ the permutation $\sigma^*$ also solves our instance of MTSP, and hence the original TSP problem. Since TSP is known to be NP-hard,  our problem is too.
\end{proof}

\begin{rem}
Note that the subspaces $M_1,\dots,M_N$ are not merely independent but satisfy the much stronger conditions described in Theorem~\ref{thm:Fried}. In particular, the result remains true if the subspaces which we are trying to order are merely pairwise quasi-disjoint in the sense of Section~\ref{sec:intro}.
\end{rem}

The result shows that the existence of any polynomial-time algorithm which solves the subspace ordering problem in a sufficiently large number of cases implies that $\mathrm{P}=\mathrm{NP}$.   In particular, we obtain the following consequence for the Greedy Algorithm.

\begin{cor}\label{cor:TSP}
Correctness of the Greedy Algorithm for independent subspaces with pairwise distinct Friedrichs numbers implies that $\mathrm{P}=\mathrm{NP}$.
\end{cor}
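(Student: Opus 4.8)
The plan is to combine the NP-hardness established in Proposition~\ref{thm:TSP} with the observation that, in the absence of ties, the Greedy Algorithm terminates in polynomial time. If the algorithm were correct for independent subspaces with pairwise distinct Friedrichs numbers, it would then furnish a polynomial-time algorithm solving an NP-hard problem, forcing $\mathrm{P}=\mathrm{NP}$.

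First I would analyse the running time of the Greedy Algorithm under the assumption that the Friedrichs numbers are pairwise distinct. The only step which could in principle lead to super-polynomial behaviour is the tie-breaking clause, in which one is instructed to branch over all indices $\ell$ minimising $c(M_{\sigma_k(j-1)},M_\ell)$; in the worst case such recursive branching is exponential in $N$. However, when the off-diagonal entries of the Friedrichs matrix are pairwise distinct the minimising index $\ell$ is unique at every stage, so this clause is never invoked. Each permutation $\sigma_k$ is then built by $N-1$ successive minimisations, each requiring $O(N)$ comparisons, and the associated value $r_{\sigma_k}$ is computed with $O(N)$ multiplications. Taking the best over the $N$ starting indices, the whole procedure runs in $O(N^3)$ operations, which is polynomial in the size of the input Friedrichs matrix.

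Next I would note that the Greedy Algorithm depends on the subspaces only through their mutual Friedrichs numbers, so it may be regarded as acting directly on the corresponding Friedrichs matrix. Given an instance of MTSP with pairwise distinct weights in the range $[0,1]$, as may be assumed without loss of generality by the reduction in the proof of Proposition~\ref{thm:TSP}, I would form the symmetric matrix $C$ whose off-diagonal entries are these weights; by Theorem~\ref{thm:Fried} it is the Friedrichs matrix of a collection of independent closed subspaces with pairwise distinct Friedrichs numbers. Running the Greedy Algorithm on $C$ returns a permutation $\sigma_G$ with $r_{\sigma_G}=r_G$ in polynomial time.

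Finally, suppose the Greedy Algorithm is correct in the stated sense, so that $r_G=r_*$ for every such collection. Then $\sigma_G$ achieves $\Pi_{\sigma_G}=r_G=r_*=\min\{r_\sigma:\sigma\in S_N\}$, and since $r_\sigma=\Pi_\sigma$ this permutation solves the given instance of MTSP, and hence the underlying TSP, in polynomial time. As TSP with distinct costs is NP-hard, this forces $\mathrm{P}=\mathrm{NP}$. The one point demanding care is precisely the running-time analysis: everything hinges on the distinctness hypothesis eliminating the potentially exponential tie-breaking branch, which is exactly why the corollary is stated for pairwise distinct Friedrichs numbers.
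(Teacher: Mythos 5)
Your proposal is correct and follows the same route as the paper: the paper's proof of Corollary~\ref{cor:TSP} consists precisely of the observation that under the distinctness hypothesis the tie-breaking branch is never invoked and the Greedy Algorithm terminates in $O(N^3)$ steps, with the NP-hardness input supplied by Proposition~\ref{thm:TSP}. You simply spell out in full the reduction and the appeal to Theorem~\ref{thm:Fried}, which the paper leaves implicit since it already appears in the proof of Proposition~\ref{thm:TSP}.
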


\begin{proof}
It is straightforward to see that if all the pairwise Friedrichs numbers are distinct then the Greedy Algorithm terminates after $O(N^3)$ steps, where $N\ge2$ is the number of subspaces we a required to order optimally.
\end{proof}

\begin{rem}
The version of the Greedy Algorithm formulated in \cite[Section~9]{KaWe88} differs from ours in that it does not consider all possible greedy paths and hence runs in polynomial time even if the pairwise Friedrichs numbers are not assumed to be distinct. Note also that, as in the case of Proposition~\ref{thm:TSP}, the assumption of  independence on the subspaces can be relaxed to pairwise quasi-disjointness.
\end{rem}

Given that the question whether $\mathrm{P}=\mathrm{NP}$  is a long-standing open problem, one may view Proposition~\ref{thm:TSP} as evidence suggesting that the Greedy Algorithm does not in general lead to an optimal ordering of the subspaces in question. This is indeed the case, as the following example illustrates.

\begin{ex}\label{ex}
Let $\FF\in\{\RR,\CC\}$ and let $X=\FF^4$ with the Euclidean norm. Consider the one-dimensional subspaces   $M_k=\Span\{x_k\}$, $1\le k\le 4$, where $x_1,\dotsc,x_4\in X$ are the unit vectors 
$$\begin{aligned}
x_1&=(1,0,0,0),\\
x_2&=\bigg(\frac12,\frac{\sqrt{3}}{2},0,0\bigg),\\
x_3&=\bigg(\frac15,\frac{1}{10\sqrt{2}}, \frac{\sqrt{191}}{10\sqrt{2}},0\bigg),\\
x_4&=\bigg(\frac{1}{10},\frac{1}{8}, \frac{1}{15},\frac{\sqrt{13967}}{120}\bigg).
\end{aligned}$$
The Friedrichs numbers satisfy $c(M_k,M_\ell)=|(x_k,x_\ell)|$ for $1\le k,\ell\le 4$ with $k\ne\ell$, so the associated Friedrichs matrix is given (approximately) by
$$C=
\left(
\begin{array}{cccc}
  0.0000& 0.5000  & 0.2000 & 0.1000 \\
  0.5000& 0.0000  & 0.1612  &0.1583\\
0.2000  & 0.1612  & 0.0000  & 0.0940\\
 0.1000 &  0.1583 &0.0940   &0.0000
\end{array}
\right).$$
The permutation $\sigma_G\in S_4$ produced by the Greedy Algorithm is 
$$\big(\sigma_G(k)\big)_{k=1}^4=(1,4,3,2),$$
which leads to $r_G\approx7.5772\times 10^{-4}$. The permutation $\sigma\in S_4$ given by 
$$\big(\sigma(k)\big)_{k=1}^4=(1,4,2,3)$$
leads to the optimal value $r_*=r_\sigma\approx 5.1033\times10^{-4}$, and in particular $r_G>r_*$. It follows that the Greedy Algorithm is not correct.
\end{ex}

\begin{rem}
Example~\ref{ex} disproves a claim made in \cite[Section~9]{KaWe88}, namely that the Greedy Algorithm always leads to an optimal ordering in the case of independent subspaces. The examples considered in  \cite[Section~9]{KaWe88} involve only $N=3$ subspaces, a special case in which the Greedy Algorithm performs an exhaustive search of all possible orderings (up to the direction in which they are traversed) and in particular is correct. Thus Example~\ref{ex} is minimal in terms of the number of subspaces involved.
\end{rem}

\section{Sharp estimates for the degree of suboptimality}\label{sec:est}

Having shown in Section~\ref{sec:greed} that the Greedy Algorithm does not in general lead to an optimal ordering of the subspaces in the method of alternating projections, we seek now to quantify how much the result reached by the Greedy Algorithm can disagree with the optimal result.  Given a collection of closed subspaces of a Hilbert space such that at least one of the pairwise Friedrichs numbers is zero, we see that for suitable orderings of the subspaces we obtain convergence after at most two steps in the method of alternating projections.  Another essentially uninteresting case for asymptotic analysis is when all of the pairwise Friedrichs numbers equal 1, so that no ordering leads to a useful estimate in \eqref{eq:prod}.  If either of these two cases holds we shall say that the collection of subspaces involved is \emph{non-generic}, and otherwise we call it \emph{generic}.

\begin{thm}\label{thm:error}
Let $N\ge2$ and suppose that $M_1,\dotsc,M_N$ are closed subspaces of a Hilbert space $X$. Then 
\begin{equation}\label{eq:bounds}
r_*\le r_G\le r_*^{1/2}.
\end{equation}
Moreover, the second inequality is strict unless the collection $M_1,\dotsc,M_N$ of subspaces is non-generic
\end{thm}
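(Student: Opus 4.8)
The first inequality in \eqref{eq:bounds} is immediate from the definition of $r_*$ as the minimum of $r_\sigma$ over all $\sigma\in S_N$, since the permutation returned by the Greedy Algorithm is one such $\sigma$. I would first dispose of the non-generic cases. If all pairwise Friedrichs numbers equal $1$ then $r_\sigma=1$ for every $\sigma$, so $r_*=r_G=1=r_*^{1/2}$; and if some $c(M_k,M_\ell)=0$ then $r_*=0$ (place $M_k,M_\ell$ consecutively), while running the Greedy Algorithm from the index $k$ yields a path whose first Friedrichs number is the minimal value $0$, so that $r_G=0=r_*^{1/2}$. In either case all three quantities coincide and the second inequality holds with equality, which is exactly the exceptional behaviour permitted by the statement. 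It therefore remains to prove the second inequality in the generic case, in which every Friedrichs number lies in $(0,1]$ and at least one is strictly less than $1$, and to show that it is then strict.

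For the generic case I would pass to logarithms. Writing $c_{k,\ell}=c(M_k,M_\ell)$ and $a_{k,\ell}=-\log c_{k,\ell}\in[0,\infty)$, the quantity $-\log r_\sigma$ becomes the additive cost $\sum_{k=1}^N a_{\sigma(k),\sigma(k+1)}$, an optimal ordering maximises this additive cost, and the Greedy Algorithm becomes the nearest-neighbour heuristic which at each step appends the unused index maximising the incremental cost. Since $r_G\le r_*^{1/2}$ is equivalent to $-\log r_G\ge\tfrac12(-\log r_*)$, it suffices to show that the additive cost of any single greedy run is at least half the optimal additive cost. Fixing a greedy run with visiting order $v_1,\dots,v_N$ and an optimal tour $O$, the greedy rule gives $a_{v_i,v_j}\le a_{v_i,v_{i+1}}$ whenever $j>i$, as $v_j$ was available but not chosen at step $i$. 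I would then charge each edge of $O$, written $\{v_i,v_j\}$ with $i<j$, to the greedy path edge $\{v_i,v_{i+1}\}$ at its lower-indexed endpoint; each greedy path edge receives at most the two edges of $O$ incident to $v_i$, and each charged edge has cost at most that of the receiving greedy edge. Summing shows that the optimal additive cost is at most twice the greedy additive cost, which is the required bound; since $r_G$ is at most the value of any admissible greedy run, this proves $r_G\le r_*^{1/2}$ for every configuration, the case $N=2$ being trivial since the unique tour gives $r_G=r_*=c_{1,2}^2\le c_{1,2}=r_*^{1/2}$.

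The main difficulty is the strictness, and here I would exploit the fact that $r_G$ is the minimum over all admissible starting indices and tie-breaking choices, so that it is enough to exhibit one greedy run whose cost strictly exceeds half the optimum. The slack in the charging inequality contributed at the starting vertex $v_1$ equals $2a_{v_1,v_2}-a_{v_1,O^+(v_1)}-a_{v_1,O^-(v_1)}$, where $O^\pm(v_1)$ are the two $O$-neighbours of $v_1$; this is strictly positive unless both $O$-neighbours of $v_1$ realise the minimal Friedrichs number $\min_u c(M_{v_1},M_u)$. I would therefore consider the local condition $(\star)$ that for \emph{every} index $v$ both of its $O$-neighbours realise $\min_u c(M_v,M_u)$. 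If $(\star)$ fails, then starting the greedy run at a witnessing index produces strictly positive slack, whence that run, and a fortiori $r_G$, lies strictly below $r_*^{1/2}$.

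If instead $(\star)$ holds, then comparing the two endpoints of each $O$-edge and using connectedness of the tour shows that all $O$-edges share the common value $c_{\min}=\min_{k\ne\ell}c(M_k,M_\ell)$, so that $r_*=c_{\min}^N$; moreover the Greedy Algorithm may then traverse the optimal tour itself, since each step along $O$ selects a globally minimal and hence admissible Friedrichs number, giving $r_G=r_*$. Genericity forces $c_{\min}\in(0,1)$, whence $r_*=c_{\min}^N<c_{\min}^{N/2}=r_*^{1/2}$, again strict. Thus genericity always yields strict inequality, and the non-generic cases are precisely those in which equality can occur. The crux, and the step requiring the most care, is organising this dichotomy so as to locate a single strictly-improving greedy run in every generic configuration, rather than attempting to control all runs at once.
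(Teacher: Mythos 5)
Your proof is correct and is essentially the paper's argument in logarithmic disguise: your charging scheme is precisely the paper's inequality \eqref{eq:prod2} (each tour edge is dominated by the greedy successor edge of whichever endpoint the greedy run reaches first, and each greedy edge absorbs at most two such charges, with $a\ge 0$ playing the role of $w\le 1$), applied to every greedy run to give $r_k^2\le r_\sigma$ for all $\sigma$. Your strictness dichotomy is likewise the contrapositive of the paper's equality analysis: in both arguments the equality (or $(\star)$) case forces every edge of an optimal tour to realise the local minimum at its endpoints, so that the optimal tour is itself greedy-admissible and hence $r_G=r_*$, after which genericity yields the strict inequality.
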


\begin{proof}
For $1\le k\le N$ let $\sigma_k\in S_N$ be the permutation produced by running the Greedy Algorithm with the starting vertex $\sigma_k(1)=k$ and let $r_k=r_{\sigma_k}$. Then certainly $r_*\le r_k$ for $1\le k\le N$, and hence also $r_*\le r_G$.  For $1\le k,\ell\le N$ let 
$$s_k(\ell)=\sigma_k\big(\sigma_k^{-1}(\ell)+1\big)$$
denote the index of the successor to $M_\ell$ in the ordering of the subspaces determined by $\sigma_k$, noting that $s_k(\ell)=1$ if $\sigma_k(\ell)=N$. Let $\sigma\in S_N$ and for $1\le k,\ell\le N$ with $k\ne\ell$ let $w(k,\ell)=c(M_k,M_\ell)$. Let $1\le k,\ell\le N$. If $\sigma_k^{-1}(\sigma(\ell))< \sigma_k^{-1}(\sigma(\ell+1))$, which is to say that in the ordering determined by $\sigma_k$ the subspace $M_{\sigma(\ell)}$ comes before $M_{\sigma(\ell+1)}$, then by definition of the Greedy Algorithm we must have
$$w\big(\sigma(\ell),s_k(\sigma(\ell)\big)\le w\big(\sigma(\ell),\sigma(\ell+1)\big),$$
while if  $\sigma_k^{-1}(\sigma(\ell))> \sigma_k^{-1}(\sigma(\ell+1))$ then 
$$w\big(\sigma(\ell+1),s_k(\sigma(\ell+1)\big)\le w\big(\sigma(\ell),\sigma(\ell+1)\big).$$ 
Since $w$ takes values in $[0,1]$ it follows that
\begin{equation}\label{eq:prod2}
w\big(\sigma(\ell),s_k(\sigma(\ell)\big) w\big(\sigma(\ell+1),s_k(\sigma(\ell+1)\big)\le w\big(\sigma(\ell),\sigma(\ell+1)\big)
\end{equation}
for $1\le k,\ell\le N$. Thus for $1\le k\le N$ we have
\begin{equation}\label{eq:calc}
\begin{aligned}
r_k^2&=\prod_{\ell=1}^N c(M_{\sigma_k(\ell)},M_{\sigma_k(\ell+1)})^2
= \prod_{\ell=1}^Nw\big(\sigma_k(\ell),\sigma_k(\ell+1)\big)^2\\
&=\prod_{\ell=1}^N\Big(w\big(\sigma(\ell),s_k(\sigma(\ell)\big) w\big(\sigma(\ell+1),s_k(\sigma(\ell+1)\big)\Big)\\
&\le \prod_{\ell=1}^Nw\big(\sigma(\ell),\sigma(\ell+1)\big)=\prod_{\ell=1}^N c(M_{\sigma(\ell)},M_{\sigma(\ell+1)})=r_\sigma.
\end{aligned}
\end{equation}
Since $\sigma\in S_N$ was arbitrary we deduce that $r_k^2\le r_*$ for $1\le k\le N$, and in particular $r_G^2\le r_*$, as required.

Now suppose that $r_G^2= r_*$, and let $\sigma_*\in S_N$ be a permutation such that $r_{\sigma_*}=r_*$. Since $r_G^2\le r_k^2 \le r_*$ for $1\le k\le N$, we see that in fact $r_k^2=r_*$ for $1\le k\le N$. Now either one of the pairwise Friedrichs numbers is zero or all of the pairwise Friedrichs numbers are non-zero. In the latter case it is clear from \eqref{eq:calc} that we must have equality in \eqref{eq:prod2} for $1\le k,\ell\le N$ when $\sigma=\sigma_*$. Taking $k=\sigma_*(\ell)$ in \eqref{eq:prod2} for $1\le\ell\le N$, it follows that 
$$w\big(\sigma_*(\ell),\sigma_*(\ell+1)\big)=\min\big\{w(\sigma_*(\ell),k):1\le k\le N, \,k\ne\sigma_*(\ell)\big\}$$
for $1\le\ell\le N$. It follows that $\sigma_*$ is itself a permutation considered by the Greedy Algorithm, and therefore $r_*= r_G$. Hence $r_*^2=r_*$, and since $r_*\ne0$ we have $r_*=1$,  which implies that $c(M_k,M_\ell)=1$ for $1\le k,\ell\le N$ with  $k\ne\ell$. It follows that $r_G^2<r_*$ unless the collection $M_1,\dotsc,M_N$ of subspaces  is  non-generic.
\end{proof}

It remains to be investigated to what extent the second bound in \eqref{eq:bounds} is sharp for generic constellations of subspaces. Our final example shows that it cannot be improved in the sense that given any $\varepsilon\in(0,1)$ there exists a generic constellation of subspaces of some Hilbert space such that 
$$r_G>(1-\varepsilon)r_*^{1/2}.$$ 
In fact, there exists a constellation of $N$ such subspaces for every even $N\ge4$.

\begin{ex}
Given a positive a positive integer $n\ge2$, let $N=2n$ and suppose that $0<\delta<c<1 $. By Theorem~\ref{thm:Fried} there exists a Hilbert space $X$ and a generic constellation $M_1,\dotsc,M_N$ of closed subspaces of $X$ such that for $1\le k,\ell\le N$ with $k\ne\ell$ we have
$$c(M_k,M_\ell)=\begin{cases}
c &\text{if $k=\ell\pm1\!\!\!\!\pmod{N}$}\\
c\delta &\text{if $k=\ell\pm2\!\!\!\!\pmod{N}$ and $k$ is even,}\\
1 & \text{otherwise}.
\end{cases}$$
Let $\sigma_0\in S_N$ denote the identity permutation. Then $r_{\sigma_0}=c^N$. If we think of the subspaces as the vertices of a complete graph of order $N$, and we let the edges have weights given by the pairwise Friedrichs numbers, then $r_\sigma\ge r_{\sigma_0}$ for all permutations $\sigma\in S_N$  involving no $c\delta$-edges. Moreover, any cycle $\sigma\in S_N$ which uses at least one of the $c\delta$-edges cannot use more than $n-1$ of them, and must involve at least two 1-edges, so for any such cycle 
$$r_\sigma\ge c^{n-1}(c\delta)^{n-1}=c^{N-2}\delta^{n-1}.$$ In particular, if $c^2\le\delta^{n-1}$ then $r_*=r_{\sigma_0}$. It is easy to that 
$$r_G\ge c^2(c\delta)^{n-1}=c\delta^{n-1}r_*^{1/2}.$$
Given $\varepsilon\in(0,1)$ we deduce that $r_G>(1-\varepsilon)r_*^{1/2}$ provided $c,\delta\in(0,1)$ are such that $c^2\le \delta^{n-1}$ and $c\delta^{n-1}>1-\varepsilon$. These conditions are satisfied for instance when  $(1-\varepsilon)^{1/3}<c<1$ and $\delta =c^{2/(n-1)}$. Furthermore, it is the case that for any $r,\varepsilon\in(0,1)$ there exist generic constellations of $N$ subspaces of a Hilbert space for all sufficiently large even $N\ge4$ with the properties that   $r_G>\smash{(1-\varepsilon)r_*^{1/2}}$ and $r_*=r$.
\end{ex}

\section{Acknowledgements}
For financial support O.D.\ thanks Magdalen College, Oxford, A.J.\ thanks the Mathematical Institute of the University of Oxford, S.R.\ and R.S.\ thank both St John's College, Oxford, and the Mathematical Institute, and L.S.\ thanks the EPSRC. All authors would further like to express their thanks to Alexis Chevalier, Stefan Kiefer, Dominik Peters and Zhixuan Wang for useful discussions. 

\bibliographystyle{plain}

\end{document}